\numberwithin{equation}{section}
\numberwithin{equation}{section}
\title{The {S}trength of {A}bstraction with {P}redicative {C}omprehension
}
\author{
Sean Walsh\footnote{Department of Logic and Philosophy of Science, 5100 Social Science Plaza, University of California, Irvine, Irvine, CA 92697-5100, U.S.A., swalsh108@gmail.com or walsh108@uci.edu}
}
\date{\today}
\begin{document}

\maketitle

\begin{abstract}
Frege's theorem says that second-order Peano arithmetic is interpretable in Hume's Principle and full impredicative comprehension. Hume's Principle is one example of an \emph{abstraction principle}, while another paradigmatic example is Basic Law~V from Frege's \emph{Grundgesetze}. In this paper we study the strength of abstraction principles in the presence of predicative restrictions on the comprehension schema, and in particular we study a predicative Fregean theory which contains~all the abstraction principles whose underlying equivalence relations can be proven to be equivalence relations in a weak background second-order logic. We show that this predicative Fregean theory interprets second-order Peano arithmetic (cf. Theorem~\ref{thm:main}).
\end{abstract}

\small

\tableofcontents

\normalsize

\newtheorem{thm}{Theorem}[section]
\newtheorem{prop}[thm]{Proposition}
\newtheorem{defn}[thm]{Definition}
\newtheorem{claim}[thm]{Claim}

\newpage

\section{Introduction}\label{sec:SP:01}

The main result of this paper is a predicative analogue of Frege's Theorem (cf. Theorem~\ref{thm:main}). Roughly, Frege's theorem says that one can recover all of second-order Peano arithmetic using only the resources of Hume's Principle and second-order logic. This result was adumbrated in Frege's \emph{Grundlagen} of 1884 (\cite{Frege1884aa}, \cite{Frege1980}) and the contemporary interest in this result is due to Wright's 1983 book \emph{Frege's Conception of Numbers as Objects} (\cite{Wright1983}). For more on the history of this theorem, see the careful discussion and references in Heck \cite{Heck2011aa} pp. 4-6 and Beth~\cite{Beth1959ab}.

More formally, Frege's theorem says that second-order Peano arithmetic is interpretable in second-order logic plus the following axiom, wherein the cardinality operator~$\#$ is a type-lowering function from second-order entities to first-order entities:
\begin{equation}\label{eqn:HP} 
\mbox{\emph{Hume's Principle}}: \forall \; X, Y \; (\#X=\#Y \leftrightarrow \exists \; \mbox{bijection } f:X\rightarrow Y)
\end{equation}
Of course, one theory is said to be \emph{interpretable} in another when the primitives of the interpreted theory can be defined in terms of the resources of the interpreting theory so that the translations of theorems of the interpreted theory are theorems of the interpreting theory (cf. \cite{Walsh2014aa} \S{2} or \cite{Lindstrom2003aa} pp.~96-97 or \cite{Hajek1998} pp. 148-149 or \cite{Visser2006ab} \S 2.2). For a proof of Frege's Theorem, see Chapter 4 of Wright's book (\cite{Wright1983}) or \S{2.2} pp. 1688~ff of \cite{Walsh2012aa}.

The second-order logic used in the traditional proof of Frege's Theorem crucially includes impredicative instances of the comprehension schema. Intuitively, the comprehension schema says that every formula~$\varphi(x)$ in one free first-order variable determines a second-order entity:
\begin{equation}\label{eqn:fullcomp}
\exists \; F \; \forall \;x \; (  Fx \leftrightarrow \varphi(x) )
\end{equation}
The traditional proof of Frege's Theorem uses instances of this comprehension schema in which some of the formulas in question contain higher-order quantifiers (cf. \cite{Walsh2012aa} p. 1690 equations~(44)-(45)). However, there is a long tradition of \emph{predicative mathematics}, in which one attempts to ascertain how much one can accomplish without directly appealing to such instances of the comprehension schema. This was the perspective of Weyl's great book \emph{Das Kontinuum} (\cite{Weyl1918}) and has been further developed in the work of Feferman (\cite{Feferman1964aa}, \cite{Feferman2005ab}). Many of us today learn and know of this tradition due to its close relation to the system~${\tt ACA}_0$ of Friedman and Simpson's project of reverse mathematics (\cite{Friedman1975aa}, \cite{Simpson2009aa}).

However, outside of the inherent interest in predicative mathematics, considerations related to  Frege's philosophy of mathematics likewise suggest adopting the predicative perspective. For, Wright and Hale  (\cite{Hale2001}, cf. \cite{Cook2007aa}) have emphasized that Hume's Principle~(\ref{eqn:HP}) is a special instance of the following:
\begin{equation}\label{eqn:AE}
\mbox{\emph{A[E]}}: \hspace{5mm} \forall \; X,Y \; (\partial_E(X)=\partial_E(Y) \leftrightarrow E(X,Y))
\end{equation}
wherein~$E(X,Y)$ is a formula of second-order logic and~$\partial_E$ is a type-lowering operator taking second-order entities and returning first-order entities. These principles were called \emph{abstraction principles} by Wright and Hale, who pointed out that the following crucial fifth axiom of Frege's \emph{Grundgesetze} of 1893 and 1903 (\cite{Frege1893}, \cite{Frege2013aa}) was also an abstraction principle:
\begin{equation}\label{eqn:BLV}
\mbox{\emph{Basic Law~V}}: \hspace{5mm} \forall \; X,Y \; (\partial(X)=\partial(Y) \leftrightarrow X=Y)
\end{equation}
The operator~$\partial$ as governed by Basic Law~V is called the \emph{extension} operator and the first-order entities in its range are called \emph{extensions}. Regrettably, there is no standard notation for the extension operator, and so some authors write~$\S{X}$ in lieu of~$\partial(X)$. In what follows, the symbol~$\partial$ without any subscripts will be reserved for the extension operator, whereas the subscripted symbols~$\partial_E$ will serve as the notation for the type-lowering operators present in arbitrary abstraction principles~(\ref{eqn:AE}).

While the Russell paradox shows that Basic Law~V is inconsistent with the full comprehension schema~(\ref{eqn:fullcomp}) (cf. \cite{Walsh2012aa} p. 1682), nevertheless Basic Law~V is consistent with predicative restrictions, as was shown by Parsons (\cite{Parsons1987a}), Heck (\cite{Heck1996}), and Ferreira-Wehmeier (\cite{Ferreira2002aa}). This thus suggests the project of understanding whether there is a version of Frege's theorem centered around the consistent predicative fragments of the \emph{Grundgesetze}. This project has been pursued in the last decades by many authors such as Heck (\cite{Heck1996}), Ganea (\cite{Ganea2007}), and Visser (\cite{Visser2009ac}). Their results concerned the restriction of the comprehension schema~(\ref{eqn:fullcomp}) to the case where no higher-order quantifiers are permitted. One result from this body of work says that Basic Law~V~(\ref{eqn:BLV}) coupled with this restriction on the comprehension schema is mutually interpretable with Robinson's~$Q$. Roughly, Robinson's~$Q$ is the fragment of first-order Peano arithmetic obtained by removing all the induction axioms. (For a precise definition of Robinson's~$Q$, see \cite{Hajek1998} p. 28, \cite{Simpson2009aa} p. 4, \cite{Walsh2012aa} p. 1680, \cite{Walsh2014aa} p. 106). Additional work by Visser allows for further rounds of comprehension and results in systems mutually interpretable with Robinson's~$Q$ plus iterations of the consistency statement for this theory, which are likewise known to be interpretable in other weak arithmetics   (\cite{Visser2009ac} p. 147).  In his 2005 book (\cite{Burgess2005}), Burgess surveys these kinds of developments, and writes:
\begin{quote}
[\ldots] I believe that no one working in the area seriously expects to get very much further in the sequence~$Q_m$ while working in predicative Fregean theories of whatever kind (\cite{Burgess2005} p. 145).
\end{quote}
Here~$Q_m$ is the expansion of Robinson's~$Q$ by finitely many primitive recursive function symbols and their defining equations along with induction for bounded formulas (\cite{Burgess2005} pp. 60-63), so that Burgess records the prediction that predicative Fregean theories will be interpretable in weak arithmetics.

The main result of this paper suggests that this prediction was wrong, and that predicative Fregean theories can interpret strong theories of arithmetic (cf. Theorem~\ref{thm:main}). While we turn presently to developing the definitions needed to precisely state this result, let us say by way of anticipation that part of the idea is to work both with (i) an expanded notion of a ``Fregean theory,'' so that it includes several abstraction principles, such as Basic Law~V, in addition to Hume's Principle, and (ii) an expanded notion of ``predicativity,'' in which one allows some controlled instances of higher-order quantifiers within the comprehension schema~(\ref{eqn:fullcomp}). Hence, of course, it might be that Burgess and others had merely conjectured that predicative Fregean theories in a more limited sense were comparatively weak.

This paper is part of a series of three papers, the other two being \cite{Walsh2014ac} and \cite{Walsh2015ab}. These papers collectively constitute a sequel to our paper \cite{Walsh2012aa}, particularly as it concerns the methods and components related to Basic Law~V. In that earlier paper, we showed that Hume's Principle~(\ref{eqn:HP}) with predicative comprehension did not interpret second-order Peano arithmetic with predicative comprehension (cf. \cite{Walsh2012aa} p. 1704). Hence at the outset of that paper, we said that ``in this specific sense there is no predicative version of Frege's Theorem'' (\cite{Walsh2012aa} p. 1679). The main result of this present paper (cf. Theorem~\ref{thm:main}) is that when we enlarge the theory to a more inclusive class of abstraction principles containing Basic Law~V, we do in fact succeed in recovering arithmetic. 

This paper depends on \cite{Walsh2014ac} only in that the consistency of the predicative Fregean theory which we study here was established in that earlier paper (cf. discussion at close of next section). In the paper \cite{Walsh2015ab}, we focus on embedding the system of the \emph{Grundgesetze} into a system of intensional logic. The alternative perspective of \cite{Walsh2015ab} then suggests viewing the consistent fragments of the \emph{Grundgesetze} as a species of intensional logic, as opposed to an instance of an abstraction principle.

This paper is organized as follows. In \S\ref{sec:SP:02} we set out the definitions of the predicative Fregean theory. In \S\ref{sec:interpretsetnegh} it is shown how this predicative Fregean theory can recover full second-order Peano arithmetic. In \S\ref{sec:fragile} it is noted that some theories which are conceptually proximate to the predicative Fregean theory are nonetheless inconsistent. 

\section{Defining a theory of abstraction with predicative comprehension}\label{sec:SP:02}

The predicative Fregean theory with which we work in this paper is developed within the framework of second-order logic. The language $L_0$ of the background second-order logic is an~$\omega$-sorted system with sorts for first-order entities, unary second-order entities, binary second-order entities etc. Further, following the Fregean tradition, the first-order entities are called \emph{objects}, the unary second-order entities are called \emph{concepts}, and the~$n$-ary second-order entities for~$n\geq 1$ are called {\it~$n$-ary concepts}. Rather than introduce any primitive notation for the different sorts, we rather employ the convention of using distinctive variables for each sort: objects are written with lower-case Roman letters~$x,y,z,a,b,c\ldots$, concepts are written with upper-case Roman letters~~$X,Y,Z,A,B,C,F,G,H,U,\ldots$,~$n$-ary concepts for~$n>1$ are written with the upper case Roman letters~$R,S,T$, and~$n$-ary concepts are written with the Roman letters~$f,g,h$ when they are graphs of functions.

Besides the sorts, the other basic primitive of the signature of the background second-order logic $L_0$ are the predication relations. One writes~$Xa$ to indicate that object~$a$ has property or concept~$X$. Likewise, there are predication relations for~$n$-ary concepts, which we write as~$R(a_1, \ldots, a_n)$. The final element of the signature $L_0$ of the background second-order logic are the projection symbols. The basic idea is that one wants, primitive in the signature~$L_0$, a way to move from the binary concept~$R$ and the object~$a$ to its projection~$R[a]=\{b: R(a,b)\}$. We assume that the signature $L_0$ of the background second-order logic is equipped with symbols~$(R,a_1, \ldots, a_m)\mapsto R[a_1, \ldots, a_m]$ from~$(m\mbox{+}n)$-ary concepts~$R$ and an~$m$-tuple of objects~$(a_1,\ldots, a_m)$ to an~$n$-ary concept~$R[a_1, \ldots, a_m]=\{(b_1, \ldots, b_n): R(a_1, \ldots, a_m, b_1, \ldots, b_n)\}$. Further, typically in what follows we avail ourselves of the tuple notation $\overline{a}= a_1, \ldots, a_n$ and thus write predication and projection more succinctly as $R(\overline{a})$ and $R[\overline{a}]$, respectively.

All this in place, we can then formally define the signature $L_0$ of the background second-order logic as follows:
\begin{defn}\label{defnL0}
The signature $L_0$ of the background second-order logic is a many-sorted signature which contains (i) a sort for objects and for each $n\geq 1$ a sort for $n$-ary concepts, (ii) for each $n\geq 1$, an $(n+1)$-ary predication relation symbol $R(a_1, \ldots, a_n)$ which holds between an $n$-ary concept $R$ and an $n$-tuple of objects $a_1, \ldots, a_n$, and (iii) for each $n,m\geq 1$, an $(m+1)$-ary projection function symbol $(R,a_1, \ldots, a_m)\mapsto R[a_1, \ldots, a_m]$ from an $(m+n)$-ary concept~$R$ and an~$m$-tuple of objects~$(a_1,\ldots, a_m)$ to an~$n$-ary concept~$R[a_1, \ldots, a_m]$.
\end{defn}
\noindent As is usual in many-sorted signatures, we adopt the convention that each sort has its own identity symbol, so that technically cross-sortal identities are not well-formed. But we continue to write all identities with the usual symbol~``$=$'' for the ease of readability.

The expansions of second-order logic with which we work are designed to handle abstraction principles~(\ref{eqn:AE}). Hence, suppose that $L$ is an expansion of $L_0$. Suppose that~$E(R,S)$ is an $L$-formula with two free~$n$-ary relation variables for some~$n\geq 1$, with all  free variables of $E(R,S)$ explicitly displayed. Then we may expand $L$ to a signature $L[\partial_E]$ which contains a new function symbol~$\partial_E$ which takes~$n$-ary concepts~$R$ and returns the object~$\partial_E(R)$. Then the following axiom, called \emph{the abstraction principle associated to~$E$}, is an $L[\partial_E]$-sentence:
\begin{equation}\label{eqn:AE2}
\mbox{\emph{A[E]}}: \hspace{5mm} \forall \; R,S \; (\partial_E(R)=\partial_E(S) \leftrightarrow E(R,S))
\end{equation}
This generalizes the notion of an abstraction principle~(\ref{eqn:AE}) described in the previous section in that the domain of the operator~$\partial_E$ can be~$n$-ary concepts for any specific~$n\geq 1$. 

This generalization is warranted by several key examples, such as that of ordinals. Let~$R$ be a binary concept and let~$\mathrm{Field}(R)$ be the unary concept $F$ such that $Fx$ iff there is a $y$ such that $Rxy$ or $Ryx$. Then consider the following formula~$E(R,S)$ on binary concepts:
\begin{align}
&  [(\mathrm{Field}(R),R)\models \mathrm{wo}  \vee (\mathrm{Field}(S),S)\models \mathrm{wo}]\rightarrow\label{eqn:BF}  \\
& \hspace{5mm}\exists \mbox{ isomorphism } f: (\mathrm{Field}(R),R)\rightarrow (\mathrm{Field}(S),S) \notag
\end{align}
In this, ``$\mathrm{wo}$'' denotes the natural sentence in the signature of second-order logic which says that a binary concept is a well-order, i.e. a linear order such that every non-empty subconcept of its domain has a least element. It's not too difficult to see that~$E(R,S)$ is an equivalence relation on binary concepts, and that two well-orders will be~$E$-equivalent if and only if they are order-isomorphic. Just as the Russell paradox shows that Basic Law~V~(\ref{eqn:BLV}) is inconsistent with the full comprehension schema, so one can use the Burali-Forti paradox to show that~$A[E]$ for this~$E$ in equation~(\ref{eqn:BF}) is inconsistent with the full comprehension schema (cf. \cite{Hodes1984} p. 138 footnote, \cite{Boolos1998} pp. 214, 311). To handle these abstraction principles we need to adopt restrictions on the comprehension schema, to which we presently turn.

There are three traditional predicative varieties of the comprehension schema: the first-order comprehension schema, the $\Delta^1_1$-comprehension schema, and the $\Sigma^1_1$-choice schema (cf. \cite{Simpson2009aa} {VII.5-6}, \cite{Walsh2012aa} Definition 5 p. 1683). However, to make the comparison with the full comprehension schema~(\ref{eqn:fullcomp}) precise, we should restate it to include not only concepts but~$n$-ary concepts for all~$n\geq 1$ and to indicate its explicit dependence on a signature:
\begin{defn}
Suppose that $L$ is an expansion of $L_0$. Then the \emph{Full Comprehension Schema for $L$-formulas} consists of all axioms of the form \;$\exists \; R \; \forall \; \overline{a} \; (R\overline{a} \leftrightarrow \varphi(\overline{a}))$, wherein~$\varphi(\overline{x})$ is an $L$-formula, perhaps with parameters, and~$\overline{x}$ abbreviates~$(x_1, \ldots, x_n)$ and~$R$ is an~$n$-ary concept variable for~$n\geq 1$ that does not appear free in~$\varphi(\overline{x})$.
\label{eqn:unaryfullcomp:SP}
\end{defn}
\noindent The most restrictive predicative version of the comprehension schema is then the following, where the idea is that \emph{no} higher-order quantifiers are allowed in the formulas:
\begin{defn}
Suppose that $L$ is an expansion of $L_0$.  The \emph{First-Order Comprehension Schema for $L$-formulas} consists of all axioms of the form \;$\exists \; R \; \forall \; \overline{a} \; (R\overline{a} \leftrightarrow \varphi(\overline{a}))$, wherein~$\varphi(\overline{x})$ is an $L$-formula with no second-order quantifiers but perhaps with parameters, and~$\overline{x}$ abbreviates~$(x_1, \ldots, x_n)$ and~$R$ is an~$n$-ary concept variable for~$n\geq 1$ that does not appear free in~$\varphi(\overline{x})$. \label{pred:comp:schema:SP}
\end{defn}

A more liberal version of the comprehension schema is the so-called~$\Delta^1_1$-comprehension schema. A~$\Sigma^1_1$-formula (resp.~$\Pi^1_1$-formula) is one which begins with a block of existential quantifiers (resp. universal quantifiers) over~$n$-ary concepts for various~$n\geq 1$ and which contains no further second-order quantifiers. One then defines:
\begin{defn} \label{delta11comp:PS} Suppose that $L$ is an expansion of $L_0$. Then the {\it~$\Delta^1_1$-Comprehension Schema for $L$-formulas} consists of all axioms of the form
\begin{equation}
(\forall \; \overline{x} \; \varphi(\overline{x})\leftrightarrow \psi(\overline{x}))\rightarrow \exists \; R \; \forall \; \overline{a} \; (R\overline{a} \leftrightarrow \varphi(\overline{a}))
\end{equation}
wherein~$\varphi(\overline{x})$ is a~$\Sigma^1_1$-formula in the signature of $L$ and~$\psi(\overline{x})$ is a~$\Pi^1_1$-formula in the signature of $L$ that may contain parameters, and~$\overline{x}$ abbreviates~$(x_1, \ldots, x_n)$, and~$R$ is an~$n$-ary concept variable for~$n\geq 1$ that does not appear free in~$\varphi(\overline{x})$ or $\psi(\overline{x})$.
\end{defn}
\noindent Finally, traditionally one also includes amongst the predicative systems the following choice principle:
\begin{defn}
Suppose that $L$ is an expansion of $L_0$.  The {\it$\Sigma^1_1$-Choice Schema for $L$-formulas} consists of all axioms of the form
\begin{equation}
[\forall \; \overline{x} \; \exists \; R^{\prime} \; \varphi(R^{\prime},\overline{x})]\rightarrow \exists \; R \; [\forall \; \overline{x} \; \varphi(R[\overline{x}],\overline{x})]
\end{equation}
wherein the $L$-formula~$\varphi(R^{\prime},\overline{x})$ is~$\Sigma^1_1$, perhaps with parameters, and~$\overline{x}$ abbreviates~$(x_1, \ldots, x_m)$ and~$R$ is an~$(m+n)$-ary concept variable for~$n,m\geq 1$ that does not appear free in~$\varphi(R^{\prime}, \overline{x})$ where~$R^{\prime}$ is an~$n$-ary concept variable.
\label{sigam11choice:PS} \end{defn}
\noindent The~$\Sigma^1_1$-Choice Schema and the First-Order Comprehension Schema together imply the~$\Delta^1_1$-Comprehension Schema (cf. \cite{Simpson2009aa} Theorem V.8.3 pp. 205-206, \cite{Walsh2012aa} Proposition 6 p. 1683). Hence, even if one's primary interest is in the latter schema, typically theories are axiomatized with the two former schemas since they are deductively stronger, and that is how we proceed in this paper.

To the signature $L_0$ of the weak background second-order logic, we want to associate a certain weak background $L_0$-theory. Some of the axioms of this background theory axiomatize the behavior of the predication symbols and the projection symbols. For each $m\geq 1$, one has the following \emph{extensionality axiom}, wherein $R, S$ are $m$-ary concept variables and $\overline{a}=a_1, \ldots, a_m$ are object variables:  
\begin{equation}\label{axiom:ext}
\forall \; R, S \; [R=S \leftrightarrow (\forall \overline{a} \; (R(\overline{a})\leftrightarrow S(\overline{a})))]
\end{equation}
But it should be noted that some authors don't explicitly include the identity symbol for concepts or higher-order entities and simply take it as an abbreviation for coextensionality (cf. \cite{Simpson2009aa} pp. 2-3, \cite{Burgess2005} pp. 14-15). Second, for each $n,m\geq 1$, one has the following \emph{projection axioms} governing the behavior of the projection symbols, wherein $R$ is an $(m\mbox{+}n)$-ary concept variable and $\overline{a}=a_1, \ldots, a_m, \overline{b}=b_1, \ldots, b_n$ are object variables:
\begin{equation}\label{axiom:proj}
\forall \; R \; \forall \; \overline{a}, \overline{b} \; [(R[\overline{a}])(\overline{b}) \leftrightarrow R(\overline{a}, \overline{b})]
\end{equation}
Finally, with all this in place, we can define the weak background theory of second-order logic:
\begin{defn}
The \emph{weak background theory of second-order logic ${\tt \Sigma^1_1\mbox{-}OS}$} is $L_0$-theory consisting of (i) the extensionality axioms~(\ref{axiom:ext}) and the projection axioms~(\ref{axiom:proj}) and (ii) the~$\Sigma^1_1$-Choice Schema for $L_0$-formulas~(Definition~\ref{sigam11choice:PS})  and (iii) the First-Order Comprehension Schema for $L_0$-formulas (Definition~\ref{pred:comp:schema:SP}). \label{eqn:secondorderlogic:PS}
\end{defn}
\noindent In the theory ${\tt \Sigma^1_1\mbox{-}OS}$ and its extensions, we use standard abbreviations for various operations on concepts, for instance $X\cap Y = \{z: Xz \; \& \; Yz\}$ and $\{x\}=\{z: z=x\}$ and $X\times Y =\{(x,y): Xx \; \& \; Yy\}$ and $\emptyset = \{x: x\neq x\}$. In general, we use $\{x: \Phi(x)\}$ as an abbreviation for the concept $F$ such that $Fx$ iff $\Phi(x)$, assuming that $\Phi(x)$ is a formula which falls under one of the comprehension principles available in the theory in which we are working.

This weak background theory ${\tt \Sigma^1_1\mbox{-}OS}$ of second-order logic is used to define the following Fregean theory at issue in this paper. If $E(R,S)$ is an $L_0$-formula with two free $n_E$-ary concept variables and no further free variables, then we let $\mathrm{Equiv}(E)$ abbreviate the $L_0$-sentence expressive of $E$ being an equivalence relation on $n_E$-ary concepts, i.e. the universal closure of the following, wherein $R,S,T$ are $n_E$-ary concept variables:
\begin{equation}\label{eqn:mymyequiv}
[E(R,R) \; \& \; (E(R,S)\rightarrow E(S,R)) \; \& \; ((E(R,S) \; \& \; E(S,T))\rightarrow E(R,T))]
\end{equation}
Then consider the following collection of $L_0$-formulas which consists of all the $L_0$-formulas $E(R,S)$ with two free $n_E$-ary concept variables and no further free variables such that ${\tt \Sigma^1_1\mbox{-}OS}$ proves $\mathrm{Equiv}(E)$:
\begin{equation}\label{eqn:provL0}
\mathrm{ProvEquiv}(L_0)=\{E(R,S) \mbox{ is an $L_0$ formula}: {\tt \Sigma^1_1\mbox{-}OS}\vdash \mathrm{Equiv}(E)\}
\end{equation}
Then define the following expansion of $L_1$ of $L_0$:
\begin{defn}\label{defn:L1}
Let $L_1$ consist of the expansion of the signature $L_0$~(\ref{defnL0}) by a new function symbol $\partial_E$ from $n_E$-ary concepts to objects for each $E$ from $\mathrm{ProvEquiv}(L_0)$~(\ref{eqn:provL0}). 
\end{defn}
Then we define the predicative theory as follows:
\begin{defn}\label{defn:pft}
The \emph{predicative Fregean theory}, abbreviated ${\tt PFT}$, is the $L_1$-theory consisting of 
(i) the extensionality axioms~(\ref{axiom:ext}) and the projection axioms~(\ref{axiom:proj}) and (ii) the~$\Sigma^1_1$-Choice Schema for $L_1$-formulas~(Definition~\ref{sigam11choice:PS})  and (iii) the First-Order Comprehension Schema for $L_1$-formulas (Definition~\ref{pred:comp:schema:SP}), and (iv) the abstraction principle $A[E]$~(\ref{eqn:AE2}) for each $E$ from $\mathrm{ProvEquiv}(L_0)$~(\ref{eqn:provL0}).
\end{defn}
\noindent Hence, the theory ${\tt PFT}$ is a recursively enumerable theory in a recursively enumerable signature $L_1$. If one desired a recursive signature, one could alternatively define $L_1$ to consist of function symbols $\partial_E$ from $n_E$-ary concepts to objects for \emph{each} $L_0$-formula $E$, regardless of whether it was in $\mathrm{ProvEquiv}(L_0)$~(\ref{eqn:provL0}). This is because clause~(iv) in Definition~\ref{defn:pft} only includes the abstraction principle $A[E]$~(\ref{eqn:AE2}) when the formula $E$ is in fact in the set $\mathrm{ProvEquiv}(L_0)$~(\ref{eqn:provL0}).

While this definition is technically precise, the niceties ought not obscure the intuitiveness of the motivating idea. For, the idea behind this  predicative Fregean theory is that it conjoins traditional predicative constraints on comprehension together with the idea that abstraction principles associated to certain $L_0$-formulae are always available. More capaciously: if we start from weak background theory of second-order logic ${\tt \Sigma^1_1\mbox{-}OS}$ and if we can prove in this theory that an $L_0$-formula~$E(R,S)$ in the signature of this weak background logic is an equivalence relation on~$n_E$-ary concepts for some~$n_E\geq 1$, then the predicative Fregean theory ${\tt PFT}$ includes the abstraction principle~$A[E]$~(\ref{eqn:AE2}) associated to~$E$. Hence the theory ${\tt PFT}$ includes the abstraction principles associated to number, extension, and ordinal, namely Hume's Principle~(\ref{eqn:HP}), Basic Law~V~(\ref{eqn:BLV}) and the abstraction principle associated to ordinals~(cf. (\ref{eqn:BF}) above).

One of the aims of the earlier paper \cite{Walsh2014ac} was to establish the following:
\begin{thm}\label{thm:itsconsistent}
The theory ${\tt PFT}$ is consistent.
\end{thm}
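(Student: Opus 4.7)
The plan is to exhibit an explicit $L_1$-structure satisfying every axiom of ${\tt PFT}$. As noted in the excerpt, the construction is carried out in detail in the earlier paper \cite{Walsh2014ac}; here I sketch what I take to be the natural strategy.

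First I would take the first-order domain to be the natural numbers $\mathbb{N}$ and interpret the $n$-ary concept sorts as the hyperarithmetic $n$-ary relations on $\mathbb{N}$, with predication as membership and the projection symbols interpreted extensionally. The initial verification step is that this structure models the weak background theory ${\tt \Sigma^1_1\mbox{-}OS}$, which reduces to the classical fact that the hyperarithmetic sets form a model of $\Sigma^1_1$-AC$_0$ (cf. \cite{Simpson2009aa} Ch.~VIII). Extensionality and the projection axioms are automatic since concepts are interpreted as actual relations.

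Next, for each $E \in \mathrm{ProvEquiv}(L_0)$, I would interpret the new function symbol $\partial_E$ on the $n_E$-ary concepts of the structure. Since $E$ is an $L_0$-formula and ${\tt \Sigma^1_1\mbox{-}OS} \vdash \mathrm{Equiv}(E)$, the relation $E$ is in fact an equivalence relation on the $n_E$-ary hyperarithmetic relations of the structure; as there are only countably many such relations, the quotient is countable and one may fix an injection into $\mathbb{N}$ and set $\partial_E(R)$ to be the image of $[R]_E$. The abstraction principle $A[E]$ in clause~(iv) of Definition~\ref{defn:pft} then holds by construction.

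The main obstacle is to carry out these choices uniformly across all $E \in \mathrm{ProvEquiv}(L_0)$ in such a way that the resulting $L_1$-structure continues to satisfy the First-Order Comprehension Schema and the $\Sigma^1_1$-Choice Schema in the expanded language $L_1$; a careless choice of representatives could destroy the requisite definability closure. The key is to use an effective enumeration of hyperarithmetic indices, together with $\Sigma^1_1$-AC at the metatheoretic level, to pick representatives so that each interpretation of $\partial_E$ is itself uniformly hyperarithmetic in its input code. Once this is secured, any first-order $L_1$-formula with hyperarithmetic parameters still defines a hyperarithmetic relation (since first-order $L_1$-formulas involve no new second-order quantification and the newly added terms $\partial_E$ are hyperarithmetic), and $\Sigma^1_1$ $L_1$-formulas translate to $\Sigma^1_1$ conditions over arithmetic. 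Hence the schemata in clauses~(ii) and~(iii) of Definition~\ref{defn:pft} are preserved, and the resulting $L_1$-structure is a model of ${\tt PFT}$.
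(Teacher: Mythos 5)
Your proposed model cannot exist, and the reason is Theorem~\ref{eqn:Iampreviousprop} of this very paper: ${\tt PFT}$ proves \emph{every} instance of the Full Comprehension Schema for $L_0$-formulas, and hence (Theorem~\ref{thm:main}) interprets full ${\tt PA}^2$. Consequently the second-order domain of any model of ${\tt PFT}$ whose objects look like $\mathbb{N}$ must be closed under arbitrary impredicative $L_0$-definability over the whole structure, and the hyperarithmetic relations are not so closed (they do not even form an $\omega$-model of $\Pi^1_1$-${\tt CA}_0$). The step you flag as ``the main obstacle'' --- preserving First-Order Comprehension and $\Sigma^1_1$-Choice for $L_1$-formulas after the $\partial_E$'s are added --- is therefore not a matter of choosing representatives carefully; it is provably insurmountable for the domain ${\rm HYP}$. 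Concretely, for the ad hoc equivalence relations $E$ built from an arbitrary $L_0$-formula $\Phi(x,G)$ in the proof of Theorem~\ref{eqn:Iampreviousprop}, first-order $L_1$-comprehension plus $\Delta^1_1$-comprehension forces $\{x:\Phi(x,G)\}$ into the model, so your claim that ``any first-order $L_1$-formula with hyperarithmetic parameters still defines a hyperarithmetic relation'' is false for every admissible interpretation of the $\partial_E$'s. More globally, since ${\tt PFT}$ interprets ${\tt PA}^2$, no consistency proof can be carried out with the predicative resources you invoke (effective enumerations of hyperarithmetic indices, $\Sigma^1_1$-choice in the metatheory); by G\"odel's second incompleteness theorem the metatheory must exceed full second-order arithmetic.

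The paper's actual argument is quite different and makes no direct model construction here: it enumerates $\mathrm{ProvEquiv}(L_0)$ as $E_1, E_2, \ldots$, uses compactness to reduce to the joint consistency of ${\tt \Sigma^1_1\mbox{-}OS}$ with finitely many abstraction principles $A[E_1], \ldots, A[E_n]$ at a time, and then cites the Joint Consistency Theorem of \cite{Walsh2014ac}, whose models are built from G\"odel's constructible hierarchy rather than from hyperarithmetic sets over $\mathbb{N}$ --- which is consistent with the strength requirements above. If you want to repair your sketch, the compactness reduction is worth adopting in any case, but the essential missing ingredient is a second-order domain rich enough to be closed under full $L_0$-comprehension while still admitting injections of the $n_E$-ary concepts modulo $E$ into the objects.
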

\begin{proof}
Let $E_1, \ldots, E_n, \ldots$ enumerate the elements of the collection $\mathrm{ProvEquiv}(L_0)$ from equation~(\ref{eqn:provL0}). By compactness, it suffices to establish, for each $n\geq 1$, the consistency of the subsystem of ${\tt PFT}$ which is formed by restricting part~(iv) of the Definition of ${\tt PFT}$ to the abstraction principles $A[E_1], \ldots, A[E_n]$. But then this theory is a subtheory of the theory which, in the paper \cite{Walsh2014ac}, we called ${\tt \Sigma^1_1-}{\tt [E_1, \ldots, E_n]A}+{\tt SO}+{\tt GC}$. The consistency of this theory was established in the Joint Consistency Theorem of that paper.
\end{proof}

\section{Interpreting second-order arithmetic in the theory}\label{sec:interpretsetnegh}

While the predicative Fregean Theory only explicitly includes predicative instances of the comprehension schema for $L_0$-formulas, surprisingly it is able to deductively recover all instances of the Full Comprehension Schema for $L_0$-formulas.

\begin{thm}\label{eqn:Iampreviousprop}
${\tt PFT}$ proves each instance of the Full Comprehension Schema for $L_0$-formulas.
\end{thm}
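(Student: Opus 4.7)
The plan is to exploit the abundance of abstraction principles in $\mathrm{ProvEquiv}(L_0)$ via a \emph{singleton trick}: for each $L_0$-formula $\varphi$, I would construct an associated $L_0$-equivalence relation $E_\varphi$ whose abstraction operator $\partial_{E_\varphi}$ lets me translate $\varphi$ into a first-order $L_1$-formula, to which the first-order $L_1$-comprehension schema of ${\tt PFT}$ then applies.

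First, in the parameter-free case, given an $L_0$-formula $\varphi(\overline{a})$ with no further free variables, define on $n$-ary concepts (where $n=|\overline{a}|$):
\[
E_\varphi(R, S) \;\equiv\; \forall \overline{b} \;\bigl[\,(R\overline{b} \wedge \varphi(\overline{b})) \leftrightarrow (S\overline{b} \wedge \varphi(\overline{b}))\,\bigr].
\]
Reflexivity, symmetry, and transitivity of $E_\varphi$ are logical tautologies, so $E_\varphi \in \mathrm{ProvEquiv}(L_0)$ and $A[E_\varphi]$ is an axiom of ${\tt PFT}$. A short calculation yields $E_\varphi(\{\overline{a}\}, \emptyset) \leftrightarrow \neg \varphi(\overline{a})$, whence by $A[E_\varphi]$,
\[
\varphi(\overline{a}) \;\longleftrightarrow\; \partial_{E_\varphi}(\{\overline{a}\}) \neq \partial_{E_\varphi}(\emptyset).
\]
Realising $\{\overline{a}\}$ and $\emptyset$ as projections $V[\overline{a}]$ and $V'[\overline{a}]$ of fixed concepts obtained via first-order $L_0$-comprehension, the right-hand side becomes a first-order $L_1$-formula in $\overline{a}$ with concept parameters $V, V'$, so first-order $L_1$-comprehension yields the required concept $R$ with $R\overline{a} \leftrightarrow \varphi(\overline{a})$.

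To handle $\varphi$ with further free first-order or second-order parameters $\overline{y}, \overline{Z}$, I would adapt the construction by enlarging the arity of $E_\varphi$ so that its arguments $R, S$ simultaneously carry both a diagonal ``test'' isolating $\overline{a}$ and projected slices encoding each $Z_i$. The analogous singleton-trick computation, combined with case analysis on degenerate configurations (for instance, a $Z_i$ being empty reduces to the parameter-free case via syntactic substitution of $b \neq b$ for $Z_i b$), would yield a first-order $L_1$-formula equivalent to $\varphi(\overline{a}, \overline{y}, \overline{Z})$ with $\overline{y}, \overline{Z}$ and first-order $L_0$-definable auxiliary concepts as parameters, and first-order $L_1$-comprehension again furnishes the concept.

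The main obstacle is the bookkeeping in the parameter case: one must design the higher-arity $E_\varphi$ so that the singleton-trick diagonal isolates $\varphi$ at the intended parameter values without contamination from other parameter values, and degenerate configurations require separate case analyses. Once the right encoding is in place, however, verifying the tautological equivalence-relation properties of $E_\varphi$ and carrying out the singleton-diagonal computation are routine.
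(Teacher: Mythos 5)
Your proposal is correct and rests on the same core mechanism as the paper's proof: manufacture, for each target formula $\varphi$, an $L_0$-equivalence relation whose equivalence-relation axioms are provable for purely logical reasons (hence it lands in $\mathrm{ProvEquiv}(L_0)$ no matter how impredicative $\varphi$ is), and then use the associated abstraction principle to re-express $\varphi(\overline{a})$ as an identity or non-identity of abstracts, which is a first-order $L_1$-condition to which predicative comprehension applies. But your choice of $E_\varphi$ is genuinely different and in one respect cleaner. The paper's $E(R,S)$ decodes $R$ and $S$ as products $\{x\}\times G$ and compares the truth values $\Phi(x,G)\leftrightarrow\Phi(y,H)$; this forces a reference witness $x_0$ with $\Phi(x_0,G)$ and a three-way case split (no witness; witness with $G$ nonempty; witness with $G$ empty), plus a separate auxiliary principle $A[E']$ for the empty-parameter case. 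Your $E_\varphi(R,S)\equiv\forall\overline{b}\,[(R\overline{b}\wedge\varphi(\overline{b}))\leftrightarrow(S\overline{b}\wedge\varphi(\overline{b}))]$ instead compares intersections with the $\varphi$-class and tests $\{\overline{a}\}$ against $\emptyset$, so the computation $E_\varphi(\{\overline{a}\},\emptyset)\leftrightarrow\neg\varphi(\overline{a})$ needs no witness and no existence case. You also bypass the paper's appeal to $\Delta^1_1$-comprehension (used there to form the graph of $f(x)=\partial_E(\{x\}\times G)$) by writing $\{\overline{a}\}$ and $\emptyset$ as projection terms $V[\overline{a}]$, $V'[\overline{a}]$, so that $\partial_{E_\varphi}(V[\overline{a}])\neq\partial_{E_\varphi}(V'[\overline{a}])$ is outright quantifier-free in $L_1$. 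The one place where you are no more explicit than you need to be is the concept-parameter case: the device you gesture at is exactly the paper's, namely encoding the parameters into the argument of $\partial_{E_\varphi}$ (e.g.\ via $\{\overline{a}\}\times Z$, with $E_\varphi$ reading the parameter back off via the projection term $R[\overline{b}]$ inside $\varphi$), together with a $2^m$-fold case analysis on which parameters are empty; your ``substitute $b\neq b$ for $Z_ib$'' reduction handles the degenerate cases and matches the paper's own (equally schematic) treatment of multiple parameters. So the argument goes through; the differences buy you a shorter case analysis at the cost of no additional machinery.
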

\begin{proof}
Let $\Phi(x,G)$ be an $L_0$-formula with all free variables displayed, wherein $x$ is an object variable and $G$ is a unary concept variable. Let us first show that ${\tt PFT}$ proves the following instance of the Full Comprehension Schema for $L_0$-formulas (Definition~\ref{eqn:unaryfullcomp:SP}):
\begin{equation}\label{eqn:instancecomp}
\forall \; G \; \exists \; F \; \forall \; x \; (Fx \leftrightarrow \Phi(x,G))
\end{equation}
After we finish the proof of this instance, we'll comment on how to establish the general case.

First consider the following $L_0$-formulas $\mu(R,S), \nu(R,S)$ with all free variables displayed, where $R,S$ are binary concept variables:
 \begin{align}
\mu(R,S) & \equiv  [\exists\; ! \; x, G \; \mbox{with } R=\{x\}\times G] \; \& \; [\exists \; ! \; y, H\; \mbox{with } S=\{y\}\times H]\notag \\
  \; \& \; & \forall \; x,G,y,H \; [(R=\{x\}\times G \; \& \; S=\{y\}\times H) \rightarrow (\Phi(x,G)\leftrightarrow \Phi(y,H))] \label{eqn:moo} \notag \\
\nu(R,S) & \equiv  \neg [\exists\; ! \; x, G \; \mbox{with } R=\{x\}\times G] \; \& \; \neg [\exists \; ! \; y, H\; \mbox{with } S=\{y\}\times H] \notag
 \end{align}
 In this, the identity $R=\{x\}\times G$ is an abbreviation for the claim that 
 \begin{equation}
\forall \; a,b\; (R(a,b)\leftrightarrow ((a=x) \; \& \; Gb))
 \end{equation}
 Hence, $\mu(R,S)$ expresses that $R$ can be written uniquely as $\{x\}\times G$ for some $x,G$, while $S$ can be written uniquely as $\{y\}\times H$ for some $y,H$, and that $\Phi(x,G)\leftrightarrow \Phi(y,H)$. The circumstance in which a binary relation $R$ can be written as $\{x\}\times G$ but not \emph{uniquely} so is when $G$ is empty, since in this case $\{x\}\times G = \{x^{\prime}\}\times G$ for any objects $x,x^{\prime}$. Finally, consider the following $L_0$-formula $E(R,S)$ where again $R,S$ are binary concept variables and all free variables are displayed:
\begin{equation}\label{eqn:defnEEEE}
E(R,S)\equiv (\mu(R,S) \vee \nu(R,S)) 
\end{equation}

The weak background theory ${\tt \Sigma^1_1\mbox{-}OS}$ proves that $E(R,S)$ is an equivalence relation on binary concepts. For reflexivity, either $R$ can be written uniquely as $\{x\}\times G$ for some $x,G$, or not. If so, then one trivially has $\Phi(x,G)\leftrightarrow \Phi(x,G)$. This then implies $\mu(R,R)$ and so $E(R,R)$. If not, then of course $\nu(R,R)$ and so $E(R,R)$. For symmetry, it simply suffices to note that both $\mu$ and $\nu$ are symmetric in that $\mu(R,S)$ implies $\mu(S,R)$ and likewise for $\nu$. For transitivity, suppose that $E(R,S)$ and $E(S,T)$. Because of the disjunctive definition of~$E$ in (\ref{eqn:defnEEEE}), there are three cases to consider. First suppose that $\mu(R,S)$ and $\mu(S,T)$. Then we may uniquely write $R=\{x\}\times G, S=\{y\}\times H, T=\{z\}\times I$, and from $\Phi(x,G)\leftrightarrow \Phi(y,H)$ and $\Phi(y,H)\leftrightarrow \Phi(z,I)$ we may conclude that $\Phi(x,G)\leftrightarrow \Phi(z,I)$. Hence we then have $\mu(R,T)$ and thus $E(R,T)$. Second suppose that $\nu(R,S)$ and $\nu(S,T)$. These two assumptions imply that we can't write any of $R, S,T$ uniquely as the product of a singleton and a unary concept, and hence that $\nu(R,T)$ and $E(R,T)$. Finally, suppose that $\mu(R,S)$ and $\nu(S,T)$ (or vice-versa). But this case leads to a contradiction, since $\mu(R,S)$ implies that we can write $S$ uniquely as the product of a singleton and a unary concept, while $\nu(S,T)$ says that we can't. Hence $E(R,S)$ is indeed an equivalence relation on binary concepts, and provably so in the weak background theory ${\tt \Sigma^1_1\mbox{-}OS}$.

Then the $L_0$-formula $E(R,S)$ is in the set $\mathrm{ProvEquiv}(L_0)$~(\ref{eqn:provL0}). Hence the theory ${\tt PFT}$ contains the abstraction principle~$A[E]$~(\ref{eqn:AE2}). Before we verify~(\ref{eqn:instancecomp}), let us introduce another abstraction principle. Consider the following $L_0$-formulas $\mu^{\prime}(X,Y), \nu^{\prime}(X,Y)$ with all free variables displayed, where $X,Y$ are unary concept variables:
 \begin{align}
\mu^{\prime}(X,Y) & \equiv  \exists \; x \; \exists \; y \; X=\{x\} \; \& \; Y=\{y\} \; \& \; (\Phi(x,\emptyset)\leftrightarrow \Phi(y,\emptyset)) \notag \label{eqn:moo2} \\
\nu^{\prime}(X,Y) & \equiv  \neg (\exists \; x \; X=\{x\}) \; \& \; \neg (\exists \; y \; Y=\{y\}) \notag
 \end{align}
Then consider the following $L_0$-formula $E^{\prime}(X,Y)$ where again $X,Y$ are unary concept variables and all free variables are displayed:
\begin{equation}\label{eqn:defnEEEE2}
E^{\prime}(X,Y)\equiv (\mu^{\prime}(X,Y) \vee \nu^{\prime}(X,Y)) 
\end{equation}
By the same argument as the previous paragraph, ${\tt \Sigma^1_1\mbox{-}OS}$ proves that $E^{\prime}(X,Y)$ is an equivalence relation unary concepts. So the theory ${\tt PFT}$ contains the abstraction principle~$A[E^{\prime}]$~(\ref{eqn:AE2})

Now, working in ${\tt PFT}$, let us verify~(\ref{eqn:instancecomp}). There are three cases. First suppose that there is no $x_0$ with $\Phi(x_0,G)$. Then to establish~(\ref{eqn:instancecomp}) one can take $F=\emptyset$.

As a second case, suppose that there is a $x_0$ with $\Phi(x_0,G)$ and that $G$ is non-empty. Then observe that the graph of the function $f(x)=\partial_E(\{x\}\times G)$ has both a $\Sigma^1_1$- and a $\Pi^1_1$-definition:
\begin{align}
f(x)=y & \leftrightarrow \exists \; R \; (\forall \; a,b \; R(a,b)\leftrightarrow (a=x \; \& \; Gb)) \; \& \; \partial_E(R)=y\notag \\
& \leftrightarrow \forall \; R \; (\forall \; a,b \; R(a,b)\leftrightarrow (a=x \; \& \; Gb))\rightarrow  \partial_E(R)=y\label{eqn:graphme}
\end{align}
These are equivalent because we can use the First-Order Comprehension Schema for $L_1$-formulas to secure that the binary relation $R=\{x\}\times G$ exists. Hence by the $\Delta^1_1$-Comprehension Schema for $L_1$-formulas, the equivalence in (\ref{eqn:graphme}) implies that the graph of $f$ exists as a binary concept. Then by First-Order Comprehension Schema for $L_1$-formulas, the following unary concept exists:
\begin{equation}
F = \{x: f(x) = \partial_E(\{x_0\}\times G)\}
\end{equation}
Now let's argue that $F = \{x: \Phi(x,G)\}$. First suppose that $Fx$. Then $f(x) = \partial_E(\{x_0\}\times G)$ and hence $\partial_E(\{x\}\times G)=\partial_E(\{x_0\}\times G)$. Then $E(\{x\}\times G, \{x_0\}\times G)$ and since $G$ is non-empty we have $\mu(\{x\}\times G, \{x_0\}\times G)$. Then $\Phi(x, G)\leftrightarrow \Phi(x_0, G)$. Since we're assuming that $\Phi(x_0,G)$, we then conclude that $\Phi(x, G)$, which is what we wanted to show. For the converse, suppose that $\Phi(x, G)$. Since we're assuming that $\Phi(x_0,G)$ and that $G$ is non-empty we may conclude that $\mu(\{x\}\times G, \{x_0\}\times G)$ and thus $E(\{x\}\times G, \{x_0\}\times G)$ and $\partial_E(\{x\}\times G)=\partial_E(\{x_0\}\times G)$. By the definition of $f$, we then have $f(x)=\partial_E(\{x_0\}\times G)$ which by the definition of $F$ implies that $Fx$, which is what we wanted to show.

As a third case, suppose that there is an $x_0$ with $\Phi(x_0,G)$ but that $G$ itself is empty. Then we argue as before that the graph of $g(x)=\partial_{E^{\prime}}(\{x\})$ exists as a binary concept, that $F=\{x: g(x)=\partial_{E^{\prime}}(\{x_0\})\}$ exists as a unary concept, and that $F=\{x: \Phi(x,G)\}$.

This finishes the proof of (\ref{eqn:instancecomp}) in ${\tt PFT}$. The proof of the general case of the Full Comprehension Schema for $L_0$-formulas (Definition~\ref{eqn:unaryfullcomp:SP}) differs only in that unary concept variable~$F$ from~(\ref{eqn:instancecomp}) might instead be an $n$-ary concept variable and there may be more than one concept parameter~$G$, as well as some additional object parameters. But the proof of this general case is directly analogous to the proof of (\ref{eqn:instancecomp}). The only difference is that the number of abstraction principles used in the proof will increase with the number of concept parameters. In general if there are $m$-concept parameters $G_1, \ldots, G_m$, then there will be $2^m$ different abstraction principles used in the proof, since one must consider a case corresponding to the finite binary sequence $(i_1, \ldots, i_m)$, wherein $i_k=0$ indicates that $G_k$ is empty, and $i_k=1$ indicates that $G_k$ is non-empty.
\end{proof}

Before turning to the proof that ${\tt PFT}$ interprets second-order Peano arithmetic, let's briefly note that in the consistency proof from \cite{Walsh2014ac} invoked in the proof of Theorem~\ref{thm:itsconsistent}, we explicitly verified the Full Comprehension Schema for $L_0$-formulas. (In the language of that paper, these were part of the theory~${\tt SO}$, and the interested reader may consult the proof of the Joint Consistency Theorem in that paper).

While the theory ${\tt PFT}$ only explicitly includes some instances of the Full Comprehension Schema for $L_0$-formulas in its definition~(cf. Definition~\ref{defn:pft}), the previous theorem says that it proves all of them. However, even in this predicative setting, the Russell paradox can be used to show that there is no concept consisting of the extensions, i.e. the range of the extension operator~$\partial$ from Basic Law~V (\ref{eqn:BLV}). For a proof, see \cite{Walsh2012aa} Proposition 29 p. 1692. Now the formula $\mathrm{rng}(\partial)$ is definable by a $\Sigma^1_1$-formula of the signature $L_0[\partial]$. Further $L_0[\partial]$ is included in the signature~$L_1$ of ${\tt PFT}$. Hence, since the $L_1$-theory ${\tt PFT}$ is consistent by Theorem~\ref{thm:itsconsistent}, it follows that  ${\tt PFT}$ does not prove all instances of the Full Comprehension Schema for $L_1$-formulas.

This kind of situation is of course not entirely unfamiliar. For instance, Presburger arithmetic yields a complete axiomatization of the structure $(\mathbb{Z},0,1+,<)$ (cf. Marker~\cite{Marker2002aa} pp. 82 ff). So this axiomatization proves each instance of the following induction schema in the signature $L=\{0,1,+,<\}$:
\begin{equation}\label{eqn:scheamex}
[\varphi(0) \; \& \; \forall \; x\geq 0 \; (\varphi(x)\rightarrow \varphi(x+1)))]\rightarrow [\forall \; x\geq 0 \; \varphi(0)]
\end{equation}
Consider a non-standard model $G=(G,0,1,+,<)$ of Presburger arithmetic, and extend $L$ to $L^{\prime}$ by adding a new unary predicate $Z$ which is interpreted on $G$ as the integers~$\mathbb{Z}$. Then of course the axioms of Presburger arithmetic do not imply all instances of the schema~(\ref{eqn:scheamex}) in the expanded signature $L^{\prime}$. So of course it's consistent for there to be a schema and an $L^{\prime}$-theory and a subsignature $L$ of $L^{\prime}$ such that the theory proves all instances of the $L$-schema but not every instance of the $L^{\prime}$-schema.

Now let's show that ${\tt PFT}$ interprets second-order Peano arithmetic ${\tt PA}^2$. These axioms are the natural set of axioms used to describe the standard model of second-order arithmetic; see \cite{Simpson2009aa} p. 4 or \cite{Walsh2012aa} p. 1680 or \cite{Walsh2014aa} p. 106 for an explicit list of these axioms.
\begin{thm}\label{thm:main}
The predicative Fregean theory ${\tt PFT}$ interprets second-order Peano arithmetic ${\tt PA}^2$.
\end{thm}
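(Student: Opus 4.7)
The plan is to adapt Frege's classical derivation of Peano arithmetic from Hume's Principle, using Basic Law~V to supply the impredicative second-order comprehension that the original argument requires. Both Hume's Principle (from the $L_0$-formula expressing the existence of a bijection, which is an equivalence relation provable in ${\tt \Sigma^1_1\mbox{-}OS}$) and Basic Law~V (from the trivial equivalence relation $X=Y$) are in ${\tt PFT}$, so we have both the cardinality operator $\#$ and the extension operator $\partial$ available. Theorem~\ref{eqn:Iampreviousprop} gives full comprehension for $L_0$-formulas, and the $\Sigma^1_1$-Choice and First-Order Comprehension Schemas for $L_1$-formulas jointly yield $\Delta^1_1$-Comprehension for $L_1$-formulas; this is the starting toolkit.

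First I would introduce the standard Fregean definitions: $0 := \#(\emptyset)$, the predecessor relation $P(m,n)$ saying there exist $F$ and $x$ with $Fx$, $n = \#(F)$, and $m = \#(\{y: Fy \wedge y\neq x\})$, and the successor function $S$. The main obstacle is obtaining the set $N$ of natural numbers as a concept, since the usual ancestral definition
\[
Nn \;\;\leftrightarrow\;\; \forall\,F\,\bigl(F0 \,\wedge\, \forall\,m,k\,(Fm \wedge P(m,k) \rightarrow Fk) \,\rightarrow\, Fn\bigr)
\]
is $\Pi^1_1$ in $L_1$ and not obviously covered by any comprehension schema at hand. To surmount this, introduce the membership relation
\[
a \in b \;\;\leftrightarrow\;\; \exists\,X\,(b=\partial(X) \,\wedge\, Xa) \;\;\leftrightarrow\;\; \forall\,X\,(b=\partial(X) \,\rightarrow\, Xa),
\]
whose two sides are provably equivalent because Basic Law~V makes $\partial$ injective on concepts. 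By $\Delta^1_1$-Comprehension for $L_1$-formulas, $\in$ exists as a binary concept. Since every concept $X$ equals $\{a: a \in \partial(X)\}$, any second-order quantifier $\forall X\,\psi(X)$ is equivalent to a first-order quantifier $\forall b\,\psi^{\in}(b)$, where in $\psi^\in$ each atomic $Xa$ is replaced by $a \in b$, each $\partial_E(X)$ by $\partial_E(\{a: a \in b\})$, and so on. Applying this translation to the ancestral collapses it to a first-order $L_1$-formula with $\in$ as a parameter, so $N$ exists by First-Order Comprehension for $L_1$-formulas. Frege's original lemmas then yield that zero is not a successor, that $S$ is injective on $N$, and that second-order induction holds on $N$ (which is immediate from the ancestral characterization); addition and multiplication are defined by the usual Fregean recursions.

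Finally, to interpret ${\tt PA}^2$ one must also verify the full second-order comprehension and induction schemas for all $L_{{\tt PA}^2}$-formulas. Given any such formula $\varphi$, its $L_1$-translation has number parameters, set parameters coded by their extensions $\partial(X)$, and second-order quantifiers ranging over concepts of numbers; applying the same Basic-Law-V coding as above (each $\forall X$ becomes $\forall b$, each atomic $Xn$ becomes $n \in b$) yields an $L_1$-formula with no second-order quantifiers, and First-Order Comprehension then produces the required concept of numbers satisfying $\varphi$. The principal obstacle is making this translation uniform and verifying by induction on formula complexity that it commutes with all connectives and quantifiers of ${\tt PA}^2$ while respecting the fact that not every object is an extension; this is routine in spirit but requires careful bookkeeping, especially for nested second-order quantifiers and for atomic subformulas in which concept-valued terms appear inside $\partial_E$.
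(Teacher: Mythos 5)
Your strategy founders at its central step: the global membership relation $a \in b$ cannot exist as a binary concept in ${\tt PFT}$. First, the two formulas you offer are not provably equivalent: if $b$ is not in the range of $\partial$, then $\exists X(b=\partial(X) \wedge Xa)$ is false while $\forall X(b=\partial(X) \rightarrow Xa)$ is vacuously true, so the antecedent of the relevant instance of $\Delta^1_1$-Comprehension fails and the schema yields nothing. (Injectivity of $\partial$ gives the equivalence only for those $b$ that are extensions, and ``being an extension'' is itself not available as a concept.) Worse, the relation you want is outright refutable: if $\in$ existed as a binary concept, First-Order Comprehension with $\in$ as a parameter would yield $R=\{b: \neg(b \in b)\}$, and setting $r=\partial(R)$, Basic Law~V gives $r \in r \leftrightarrow Rr \leftrightarrow \neg(r\in r)$. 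This is exactly the point recorded in the paper after Theorem~\ref{eqn:Iampreviousprop} (citing Proposition~29 of \cite{Walsh2012aa}): there is no concept of all extensions, and ${\tt PFT}$ does not prove full comprehension for $L_1$-formulas. Consequently your proposed reduction of second-order quantifiers to first-order quantifiers over codes cannot be carried out.

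The repair is simpler than your workaround. Contrast your $\in$ with the graph of $s(x)=\partial(\{x\})$: there the $\Sigma^1_1$- and $\Pi^1_1$-definitions genuinely coincide, because the witness $\{x\}$ exists for every $x$ by First-Order Comprehension, so $\Delta^1_1$-Comprehension for $L_1$-formulas does deliver the graph of $s$ as a binary concept. Once that graph (or the graph of your predecessor relation) is secured as a second-order parameter, the ancestral defining $N$ is an $L_0$-formula \emph{with parameters}: its quantifier $\forall F$ is harmless, since the Full Comprehension Schema for $L_0$-formulas supplied by Theorem~\ref{eqn:Iampreviousprop} places no restriction on second-order quantifiers, only on occurrences of the symbols $\partial_E$, which now appear only inside the parameter. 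The same observation disposes of the comprehension and induction schemas of ${\tt PA}^2$, since every instance translates to an $L_0$-formula with $M$, $0$, and the graph of $s$ as parameters. This is the paper's route; your Hume's-Principle scaffolding is harmless but unnecessary, as $0=\partial(\emptyset)$ and $s(x)=\partial(\{x\})$ together with Basic Law~V already yield the two Robinson axioms needed to start the interpretation.
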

\begin{proof}
First note that the predicative Fregean theory ${\tt PFT}$ proves the existence of the graph of the function $s(x)=\partial(\{x\})$ (cf. \cite{Walsh2012aa} Proposition 27 p. 1691), where this is the abstraction operator associated to Basic Law~V~(\ref{eqn:BLV}). For, note that in ${\tt PFT}$, for all objects $x,y$, one has that the following $\Sigma^1_1$-condition and $\Pi^1_1$-conditions are equivalent:
\begin{equation}\label{eqn:provequiv}
[\exists \; X \; (X=\{x\} \; \& \; \partial(X)=y)] \leftrightarrow [\forall \; X \; (X=\{x\} \rightarrow \partial X = y)]
\end{equation}
By the $\Delta^1_1$-Comprehension Schema for $L_1$-formulas, there is then a binary relation which holds of objects $x,y$ iff either the $\Sigma^1_1$-condition holds or the $\Pi^1_1$-condition  holds. And this binary relation is obviously the graph of the function $s(x)=\partial(\{x\})$.

Let $M$ be $\{x: x=x\}$, which exists by Full Comprehension for $L_0$-formulas, and let $0=\partial(\emptyset)$. Then one has that the triple $(M,0,s)$ satisfies the first two axioms of Robinson's~$Q$:
\begin{equation}\label{eqn:BLVcerfiied}
\forall \; x \; s(x)\neq 0, \hspace{10mm} \forall \; x, y \; (s(x)=s(y)\rightarrow x=y) 
\end{equation}
For, suppose that $s(x)=0$. Then $\partial(\{x\})=\partial(\emptyset)$ and then by Basic Law~V~(\ref{eqn:BLV}) one has that $\{x\}=\emptyset$, a contradiction. Similarly, suppose that $s(x)=s(y)$. Then $\partial(\{x\})=\partial(\{y\})$ and so by Basic Law~V~(\ref{eqn:BLV}) one has that $\{x\}=\{y\}$ and hence $x=y$. Thus (\ref{eqn:BLVcerfiied}) follows immediately from Basic Law~V~(\ref{eqn:BLV}).

But then standard arguments allow one to interpret second-order Peano arithmetic ${\tt PA}^2$ by taking the natural numbers $N$ to be the sub-concept of $M$ consisting of all those subconcepts of $M$ which are ``inductive,'' that is which contain zero and closed under successor. Here of course for the existence of $N$ and the verification of the other axioms of arithmetic, one appeals to the Full Comprehension Schema for $L_0$-formulas, using $M,0,s$ as parameters (cf. \cite{Walsh2012aa} Theorem 16 p. 1688).
\end{proof}

\section{The fragility of abstraction with predicative comprehension}\label{sec:fragile}

However, in spite of its technical strength, the conceptual basis of the predicative Fregean theory ${\tt PFT}$ is rather fragile. For, the $L_1$-theory ${\tt PFT}$ was formed by adding the abstraction principle $A[E]$ associated to the $L_0$-formulas $E(R,S)$ when this formula could be proven to be an equivalence relation in the background second-order logic ${\tt \Sigma^1_1\mbox{-}OS}$. But one cannot successively iterate this idea. For, suppose that in analogue to $\mathrm{ProvEquiv}(L_0)$ in equation~(\ref{eqn:provL0}), one defines:
\begin{equation}\label{eqn:provL1}
\mathrm{ProvEquiv}(L_1)=\{E(R,S) \mbox{ is an $L_1$ formula}: {\tt PFT}\vdash \mathrm{Equiv}(E)\}
\end{equation}
And further suppose that one defines $L_2$ to be the expansion of $L_1$ by the addition of a function symbol $\partial_E$ from $n_E$-ary concepts to objects for each $L_1$-formula $E(R,S)$ in $\mathrm{ProvEquiv}(L_1)$. Finally, suppose one defines the following iteration of ${\tt PFT}$ (cf. Definition~\ref{defn:pft}):
\begin{defn}\label{defn:pft2}
The theory ${\tt PFT}_2$ is the $L_2$-theory consisting of 
(i) the extensionality axioms~(\ref{axiom:ext}) and the projection axioms~(\ref{axiom:proj}) and (ii) the~$\Sigma^1_1$-Choice Schema for $L_2$-formulas~(Definition~\ref{sigam11choice:PS})  and (iii) the First-Order Comprehension Schema for $L_2$-formulas (Definition~\ref{pred:comp:schema:SP}), and (iv) the abstraction principle $A[E]$~(\ref{eqn:AE2}) for each $E$ which is from $\mathrm{ProvEquiv}(L_0)$~(\ref{eqn:provL0}) or from $\mathrm{ProvEquiv}(L_1)$~(\ref{eqn:provL1}).
\end{defn}
\noindent Then the same argument as in the proof of Theorem~\ref{eqn:Iampreviousprop} establishes that ${\tt PFT}_2$ proves each instance of the Full Comprehension Schema for $L_1$-formulas. But then ${\tt PFT}_2$ is inconsistent, since on pain of the Russell paradox there is no concept of all extensions (cf. \cite{Walsh2012aa} Proposition 29 p. 1692), where again the extensions are the range of the abstraction operator~$\partial$ associated to Basic Law~V~(\ref{eqn:BLV}). Hence, while the predicative Fregean theory ${\tt PFT}$ is consistent, when one tries to iterate its underlying idea of adding abstraction principles when their equivalence relations can be proven to be equivalence relations, one again runs up against the Russell paradox. This indicates that the resource of abstraction principles in the predicative setting is unlike that of typed~theories of truth or second-order logic, which we may consistently add to any consistent theory.

This point is underscored when one observes that the same considerations show the inconsistency of an axiom-based analogue of the rule-based predicative Fregean theory ${\tt PFT}$. In particular, suppose that we recursively defined a signature $L^{\ast}$ extending $L_0$ so that if $E(R,S)$ is an $L^{\ast}$-formula in exactly two free $n_E$-ary concept variables then $L^{\ast}$ also contains a function symbol $\partial_E$ which takes $n_E$-ary concepts to objects and which does not occur in $E$. One could then define the following $L^{\ast}$-theory:
\begin{defn}\label{defn:pft2}
The theory ${\tt PFT}^{\ast}$ is the $L^{\ast}$-theory consisting of (i) the extensionality axioms~(\ref{axiom:ext}) and the projection axioms~(\ref{axiom:proj}) and (ii) the~$\Sigma^1_1$-Choice Schema for $L^{\ast}$-formulas~(Definition~\ref{sigam11choice:PS})  and (iii) the First-Order Comprehension Schema for $L^{\ast}$-formulas (Definition~\ref{pred:comp:schema:SP}), and (iv) the axiom $\mathrm{Equiv}(E)\rightarrow A[E]$ for each  $L^{\ast}$-formula $E$.
\end{defn}
\noindent In this, $\mathrm{Equiv}(E)$ is the sentence which says that $E$ is an equivalence relation (cf. (\ref{eqn:mymyequiv})) and $A[E]$ is the abstraction principle~(\ref{eqn:AE2}), so that the axiom $\mathrm{Equiv}(E)\rightarrow A[E]$ says that if $E$ is an equivalence relation, then $A[E]$ holds. The considerations of the previous paragraphs can be replicated in this theory ${\tt PFT}^{\ast}$, showing it to be inconsistent. However, the conceptual distance between the inconsistent $L^{\ast}$-theory ${\tt PFT}^{\ast}$ and the consistent $L_1$-theory ${\tt PFT}$ is rather slim. The difference is merely a difference between a rule and an axiom: whereas the rule-based ${\tt PFT}$ only includes an abstraction principle when the underlying equivalence relation is expressible in the weak background logic and is \emph{provably} an equivalence relation there, the axiom-based ${\tt PFT}^{\ast}$ includes a commitment to either the truth of the abstraction principle or the falsity of its underlying formula being an equivalence relation.

In response to this, one might try to restrain the predicative Fregean theory ${\tt PFT}$ so that the analogously defined iterated version of it and the analogously defined axiom-based version of it were consistent. For instance, one might consider restricting the abstraction principles added to the theory ${\tt PFT}$ to those whose underlying equivalence relation was expressible both as a $\Sigma^1_1$-formula and a $\Pi^1_1$-formula in the background second-order logic. This, it might be suggested, would be a genuinely predicative theory of abstraction principles. Such a move would block the proof of Theorem~\ref{eqn:Iampreviousprop}. For, the equivalence relation $E(R,S)$~(\ref{eqn:defnEEEE}) used in that proof is not obviously expressible in such a way. However, it is unknown to us how much arithmetic this more austerely predicative theory could interpret, and it is not obvious to us whether the analogously defined iterated version of it (or axiom-based version of it) is consistent. 

Another way forward might be to find some principled way to focus attention on abstraction principles which are somehow more like the paradigmatic Basic Law~V~(\ref{eqn:BLV}) and Hume's Principle~(\ref{eqn:HP}) and the abstraction principle associated to ordinals~(\ref{eqn:BF}), and somehow less like the seemingly ad-hoc abstraction principles constructed in the proof of Theorem~\ref{eqn:Iampreviousprop}. But to do so would be to lose some of the original motivation for focusing on predicative abstraction principles. For, part of the attraction was supposed to be that more abstraction principles became consistent and jointly consistent. And indeed, as the predicative Fregean Theory ${\tt PFT}$ attests, a good deal of joint consistency is available in this setting. Hence in the earlier paper \cite{Walsh2014ac} we said that we had resolved an analogue of the joint consistency problem. But as we have seen in this section, when we try to iterate the underlying idea of abstraction principles in the predicative setting, we again run into inconsistency and seem back in the situation of trying to discern ways to weed out the acceptable from the unacceptable abstraction principles. For an overview of the various candidates for acceptable abstraction principles in the general impredicative setting, see \cite{Linnebo2010ad} or \cite{Cook2012aa}.

Perhaps another way forward might be to give up on the idea of abstraction principles altogether and find principled reasons for studying systems centered around either Basic Law~V~(\ref{eqn:BLV}) itself or Hume's Principle~(\ref{eqn:HP}) itself or the abstraction principle associated to ordinals~(\ref{eqn:BF}) all by itself. With respect to Basic Law~V~(\ref{eqn:BLV}), this is the perspective of \cite{Walsh2015ab}, where the idea is to work within an intensional logic and see the extension operator as selecting a sense for each concept, just like we might select a specific Turing machine index for each computable function. But much remains unknown about the individual abstraction principles at the predicative level. For instance, it is to our knowledge unknown whether Basic Law~V~(\ref{eqn:BLV}) or the abstraction principle associated to ordinals~(\ref{eqn:BF}), equipped with the $\Sigma^1_1$-choice schema and the First-Order Comprehension Schema, interprets the analogous predicative versions of arithmetic (cf. \cite{Walsh2012aa} p. 1707). In this paper, the idea for interpreting arithmetic was to collect together all the predicative abstraction principles so that they could effect the interpretation together, and it is in general unclear to us what happens when one focuses on the abstraction principles one by one.

\subsection*{Acknowledgements}
I was lucky enough to be able to present parts of this work at a number of workshops and conferences, and I would like to thank the participants and organizers of these events for these opportunities. I would like to especially thank the following people for the comments and feedback: Robert~Black, Roy~Cook, Matthew~Davidson, Walter~Dean, Marie~Du\v{z}\'{\i}, Kenny~Easwaran, Fernando~Ferreira, Martin~Fischer, Rohan~French, Salvatore~Florio, Kentaro~Fujimoto, Jeremy~Heis, Joel~David~Hamkins, Volker~Halbach, Ole Thomassen~Hjortland, Luca~Incurvati, Daniel~Isaacson, J\"onne~Kriener, Graham~Leach-Krouse, Hannes~Leitgeb, {\O}ystein~Linnebo, Paolo~Mancosu, Richard~Mendelsohn, Tony~Martin, Yiannis~Moschovakis, John~Mumma, Pavel~Pudl\'ak, Sam~Roberts, Marcus~Rossberg, Tony~Roy, Gil~Sagi, Florian~Steinberger, Iulian~Toader, Gabriel~Uzquiano, Albert~Visser, Kai~Wehmeier, Philip~Welch, Trevor~Wilson, and Martin~Zeman. 

Finally, a special debt is owed to the editors and anonymous referees of this journal, to whom I express my gratitude. For, the proofs were greatly simplified by their suggestions and the previous reliance upon choice was removed by virtue of these suggestions. While composing this paper, I was supported by a Kurt G\"odel Society Research Prize Fellowship and by {\O}ystein Linnebo's European Research Council funded project ``Plurals, Predicates, and Paradox.'' 

\bibliographystyle{plain}
\bibliography{mybib.v011.bib}

\end{document}